\newenvironment{proof}[1][Proof]{\textbf{#1.} }{\hfill
  \rule{0.5em}{0.5em} \medskip}
\def\oti{\hfill \rule{.5em}{.5em}}
\newcommand{\cl}[1]{\overline{#1}}
\newtheorem{theorem}{Theorem}
\newtheorem{lemma}[theorem]{Lemma}
\newtheorem{cor}[theorem]{Corollary}
\newtheorem{defn}[theorem]{Definition}
\newcommand{\R}{\mathbb{R}}
\newcommand{\N}{\mathbb{N}}
\newcommand{\diam}{\text{diam}\, }
\begin{document}

\title{Spaces with a Finite Family of Basic 
Functions\footnote{\noindent {\bf 2000 Mathematics Subject Classification}:
26B40, 54C30; 54C35, 54E45. \vskip .1em {\bf Key Words and Phrases}:
Superposition of functions , finite dimension, locally compact, basic family,
Hilbert's 13th Problem.}}

\author{Paul Gartside\footnote{\emph{Corresponding author} Department
    of Mathematics, University of Pittsburgh, Pittsburgh, PA 15260, USA, email:
    gartside@math.pitt.edu.} and Ziqin Feng\footnote{Department
    of Mathematics, University of Pittsburgh, PA 15260, USA} \\ 
{\sl Dedicated to Bob Heath on his retirement}}

\date{July 2008}

\maketitle

\begin{abstract} A $T_1$ completely regular  space $X$ is finite
  dimensional, locally compact and separable metrizable if and only if
  $X$ has  a finite basic family: functions $\Phi_1,...,\Phi_{n}$ such
  that for all $f\in C(X)$ there are $g_1,..., g_{n}\in C(\R)$ satisfying
$f(x)=\sum_{i=1}^{n}g_i(\Phi_i(x))$ for all $x\in X$.

This give the complete solution to four problems on basic functions
posed by Sternfeld. \end{abstract}

\section{Introduction}

The 13th Problem of Hilbert's celebrated list \cite{H} is usually
interpreted as asking whether every continuous real valued function
of three variables can be written as a superposition (i.e.
composition) of continuous functions of two variables. Kolmogorov gave
a strong positive solution (we write $C(X)$ for all continuous real
valued maps on a topological space $X$, and $C^*(X)$ for the subset
of bounded maps):
\begin{theorem}[Kolmogorov Superposition, \cite{Kol}]\label{KST}
For a fixed $n \ge 2$, there are $n(2n+1)$ maps $\phi_{pq} \in
C([0,1])$  such that every map $f \in C([0,1]^n)$ can be
written:
\[ f(\mathbf{x}) = \sum_{q=1}^{2n+1} g_q\left( \Phi_q
(\mathbf{x})\right) \qquad \text{where } \Phi_q(x_1, \ldots , x_n) = 
\sum_{p=1}^n \phi_{pq} (x_p),\] and the $g_q \in C(\R)$ are maps
depending on $f$. 
\end{theorem}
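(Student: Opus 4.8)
I would prove Theorem~\ref{KST} in two largely independent stages. First, construct the \emph{inner} functions $\phi_{pq}$ — equivalently the maps $\Phi_q$ — once and for all, so that they enjoy a strong combinatorial separation property holding at every spatial scale. Second, with the $\Phi_q$ fixed, extract the \emph{outer} functions $g_q$ for an arbitrary $f \in C([0,1]^n)$ by an iterated approximation argument that sums a geometric series.

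\textbf{Stage 1: the inner functions.} For each resolution $k = 1, 2, \ldots$ I would partition $[0,1]$ into finitely many closed subintervals of length roughly $\beta^{-k}$ and take $n$-fold products to get a grid of small cubes in $[0,1]^n$; doing this with $2n+1$ suitably shifted (``incommensurable'') copies of the partition gives, for each $q$, a family of pairwise disjoint cubes that \emph{does not} cover $[0,1]^n$ but whose gaps are placed so that every point $\mathbf{x}$ lies inside some cube of at least $n+1$ of the $2n+1$ families (the ``good'' indices $q$ for $\mathbf{x}$), at each scale $k$. The functions $\phi_{pq}\in C([0,1])$ would be built as uniform limits of monotone piecewise-linear functions whose values on the $k$-th partition are chosen, via a rational-independence argument, so that the resulting $\Phi_q = \sum_{p=1}^{n}\phi_{pq}(x_p)$ maps distinct cubes of the $q$-th family into pairwise disjoint short subintervals of $\R$ with definite gaps between them, at every scale. (Equivalently, a Baire-category argument works: in the complete metric space of tuples of monotone continuous functions, the tuples having the separation property at a fixed scale form an open dense set, and one intersects over all $k$.)

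\textbf{Stage 2: outer functions by iteration.} The heart is a one-step contraction lemma. Given $f$, choose a scale $k$ so fine that $f$ oscillates by at most $\varepsilon$ on every cube of every family. For each $q$, set $g_q$ equal to the constant $\frac{1}{2n+1}f(\xi_Q)$ on the interval $\Phi_q(Q)$ for each cube $Q$ of the $q$-th family, where $\xi_Q\in Q$ is any chosen point, then interpolate $g_q$ linearly across the gaps and extend by constants; thus $g_q\in C(\R)$ and $\|g_q\|_\infty\le \frac{1}{2n+1}\|f\|_\infty$. At a point $\mathbf{x}$ the at least $n+1$ good terms each lie within $\frac{\varepsilon}{2n+1}$ of $\frac{1}{2n+1}f(\mathbf{x})$, while the at most $n$ remaining terms are each $\le\frac{1}{2n+1}\|f\|_\infty$ in absolute value, so a short estimate gives $\|f-\sum_{q}g_q\circ\Phi_q\|_\infty \le \frac{2n}{2n+1}\|f\|_\infty + \varepsilon =: \theta\|f\|_\infty$ with $\theta<1$ once $\varepsilon$ is small. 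Now iterate: apply the lemma to $f$, then to the residual $f_1 = f - \sum_q g_q^{(0)}\circ\Phi_q$ at a still finer scale (available because the separation holds at all scales), and so on, producing $g_q^{(j)}$ with $\|g_q^{(j)}\|_\infty\le\frac{\theta^{j}}{2n+1}\|f\|_\infty$ and $\|f_{j+1}\|_\infty\le\theta^{j+1}\|f\|_\infty$. Then $g_q:=\sum_{j\ge 0}g_q^{(j)}$ converges uniformly to a function in $C(\R)$ and $\sum_{q=1}^{2n+1}g_q\circ\Phi_q = f$.

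\textbf{Main obstacle.} The real work is entirely in Stage 1: one must produce a \emph{single} family $\{\phi_{pq}\}$, constrained to the rigid ``sum of one-variable functions'' form $\Phi_q = \sum_p\phi_{pq}(x_p)$, whose cube-separation and ``at least $n+1$ good out of $2n+1$'' properties hold simultaneously at every scale. The numerology is not decorative: having $2n+1$ families with coverage multiplicity at least $n+1$ is precisely what makes the contraction constant $\frac{2n}{2n+1}$ in Stage 2 strictly less than $1$, so a weaker construction would destroy the iteration. Stage 2 itself is routine once Stage 1 is available, the only genuine care being the uniform convergence of the $g_q$ — handled by the geometric decay $\theta^{j}$ — and the matching of the scale-$k$ constructions across successive steps.
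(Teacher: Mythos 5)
The paper does not prove Theorem~\ref{KST} at all: it is quoted as a classical result of Kolmogorov with a citation to \cite{Kol}, and the authors' own work begins only downstream of it (Ostrand's extension, Sternfeld's converse, and the locally compact case). So there is no in-paper argument to compare yours against.

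Taken on its own terms, your outline is the standard and correct proof strategy (essentially Kolmogorov's argument as streamlined by Lorentz and Sprecher): Stage 1 builds the inner functions $\phi_{pq}$ as limits of monotone piecewise-linear functions so that, at every scale, the $2n+1$ families of disjoint cubes cover each point at least $n+1$ times and $\Phi_q$ separates the cubes of the $q$-th family; Stage 2 is the one-step approximation with error constant $\tfrac{2n}{2n+1}+\varepsilon<1$, iterated into a geometric series. Your numerical estimate for the contraction is right: the at most $n$ ``bad'' indices each contribute at most $\tfrac{2}{2n+1}\|f\|_\infty$ to the error, and the good indices contribute at most $\varepsilon$ in total, so the residual shrinks by a fixed factor and the $g_q^{(j)}$ sum uniformly. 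You also correctly identify that all the genuine difficulty sits in Stage 1, where a single tuple $(\phi_{pq})$ must realize the separation property simultaneously at all scales; the only detail your sketch leaves implicit is the bookkeeping that makes the scale-$k$ perturbations small enough not to destroy the separation already achieved at coarser scales (or, in the Baire-category formulation, the verification that the scale-$k$ condition is open and dense). That is a known but nontrivial construction, and filling it in is what a complete proof would require. As a blueprint, the proposal is sound; note only that it proves a theorem this paper merely invokes, so none of it substitutes for, or overlaps with, the arguments in Sections~\ref{rest}--\ref{dim}.
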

In addition to solving the superposition problem, Kolmogorov's theorem
says that the functions $\Phi_1, \ldots , \Phi_{2n+1}$ from $[0,1]^n$ to
the reals form a finite `basis' for {\sl  all} continuous real valued
maps from $[0,1]^n$. This is a very striking phenomena, leading to the
following natural definition of Sternfeld \cite{St}:
\begin{defn}           Let $X$ be a topological space. A family
  $\mathbf{\Phi}   \subseteq   C(X)$ is said to be 
 {\bf basic} (respectively, {\bf basic}$^*$) for $X$ if each  $f \in
 C(X)$ (respectively, $C^*(X)$) 
can be written:
\ $\displaystyle{f=\sum_{q=1}^{n} \left(g_q \circ \Phi_q\right)}$, 

for some  $\Phi_1, \ldots , \Phi_n$ in $\mathbf{\Phi}$ and
  `co-ordinate functions' $g_1, \ldots , g_n \in C(\R)$.
\end{defn}
Beyond their intrinsic interest, basic functions
have proved to be widely useful. Since the use of basic functions
reduces calculations of functions  simply to addition and evaluation
of a fixed finite family of  functions, applications  to numerical
analysis, approximation and function reconstruction are immediately
apparent. But other applications have emerged including to neural
networks $\cite{HN, HN2, Ku}$.

Extending the Kolmogorov Superposition Theorem, Ostrand \cite{Ost} showed that
every compact metric space of dimension $n$ has a basic family of size
$2n+1$. Subsequently Sternfeld \cite{St} showed that this basic family is
minimal in the sense that a compact metric space with a basic family
of size no more than $2n+1$ must have dimension $\le n$. Noting that
Doss \cite{Doss} had shown that Euclidean $n$-space, $\R^n$, has a
basic family of size $4n$ for $n \ge 2$, Sternfeld asked (in Problems
9--13 of \cite{St}) exactly which spaces have a finite basic family,
and whether the minimal size of a basic family on a space $X$ was
$2n+1$ where $n = \dim (X)$. Hattori \cite{Hat} showed that  every
locally compact, separable metrizable  space $X$ of dimension $n$ has
a finite basic$^*$ family of size $2n+1$. He asked whether the
restriction to locally compact spaces was necessary. Our Main Theorem
below gives a strong and complete solution to all these problems.

Since spaces with a {\sl finite} basic family are finite dimensional,
it seems plausible that spaces with a {\sl countable} basic family
would be countable dimensional. But we prove that if a space has a countable
basic family, then some finite subcollection is also basic, and so the
space is finite dimensional (and locally compact, separable
metrizable). To facilitate the proof, and provide full generality we
make the following definition allowing more general superposition
representations than a `basic' representation.
\begin{defn} 
Let $X$ be a topological space. A family
  $\mathbf{\Phi}   \subseteq   C(X)$ is said to be 
 {\bf generating} (respectively, {\bf generating}$^*$) for $X$ with
  respect to a `set of operations' $M$ of continuous functions
  mapping from subsets   of Euclidean space into subsets of Euclidean
  space, if each  $f \in 
 C(X)$ (respectively, $C^*(X)$) 
can be written as a composition of functions from $\mathbf{\Phi}$, $M$ and
   $C(\R)$.
\end{defn}
Note that a basic (respectively, basic$^*$) family is generating
(respectively, generating$^*$) with respect to $M=\{+\}$, and clearly
`generating' implies `generating$^*$'.

\begin{theorem}[Main Theorem]\label{main}  Let $X$ be $T_1$ and
  completely regular. 
Then the following are equivalent:

1) $X$ has a countable generating$^*$ family with respect to a
   countable set of operations,

2) $X$ has a finite basic family, and

3) $X$ is a finite  dimensional, locally 
compact and separable metrizable.

Further, a locally compact separable metrizable space $X$ has
dimension $\le n$ if and only if it has a basic family of size $\le 2n+1$.
\end{theorem}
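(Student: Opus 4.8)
The plan is to prove the Main Theorem through a cycle of implications $(2) \Rightarrow (3) \Rightarrow (2)$ with the sharp dimension bounds, and then show $(1) \Rightarrow (3)$; the implication $(2) \Rightarrow (1)$ is immediate since a finite basic family is a (finite, hence countable) generating family with respect to the single operation $\{+\}$, and generating implies generating$^*$. For $(3) \Rightarrow (2)$ with the quantitative bound, I would invoke (and, if the cited literature is not self-contained enough, reprove) the Ostrand--Hattori style construction: given a finite-dimensional, locally compact, separable metrizable $X$ with $\dim X = n$, one builds $2n+1$ basic functions. The classical argument uses a metrization together with a sequence of locally finite open covers of decreasing mesh, colored into $n+1$ classes, assembled via a Baire-category argument in the function space $C(\R)^{2n+1}$ to produce, for a fixed countable dense set of $f$'s, the required $g_q$; a separate density/uniform-approximation step upgrades from basic$^*$ to basic using local compactness (one-point compactification arguments, or exhausting by compacta). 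So the first substantial task is to push Hattori's \emph{basic}$^*$ result to a \emph{basic} result, which is exactly where local compactness is forced.

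For $(2) \Rightarrow (3)$ I would argue in three stages. First, a space with a finite basic family embeds in $\R^n$ up to the relevant properties: if $\Phi_1, \dots, \Phi_m$ are basic, the diagonal map $x \mapsto (\Phi_1(x), \dots, \Phi_m(x))$ into $\R^m$ must separate points and, more strongly, must be such that continuous functions on $X$ factor through it; a short argument shows this map is a homeomorphic embedding onto its image (if two points were not separated, or if the topology were not the subspace topology, one could build an $f \in C(X)$ not representable), so $X$ is separable metrizable. Second, finite-dimensionality: here I would use Sternfeld's lower-bound technique — if $\dim X > n$ then $X$ contains (a copy of) a cube-like subspace on which no family of $2n+1$ functions, let alone $m$ with appropriate bookkeeping, can be basic; more carefully one reduces to compact subsets and applies the Sternfeld measure-theoretic obstruction (existence of suitable measures witnessing non-basicness). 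Third, local compactness: this is the genuinely new and hardest point. The idea is that if $X$ is separable metrizable, finite-dimensional but \emph{not} locally compact, then $X$ has a closed subspace homeomorphic to a space like $\mathbb{Q}$ or, more usefully, contains a non-locally-compact point around which one can code a function $f \in C(X)$ whose required coordinate functions $g_q$ would have to be unbounded in an incompatible way on overlapping pieces of $\bigcup \Phi_q(X)$ — contradicting that the $g_q$ are genuine continuous functions on all of $\R$. Concretely I expect to fix a point $p$ with no compact neighborhood, pick a sequence $x_k \to p$ with the $\Phi_i(x_k)$ escaping to values where the $g_i$ are forced, by earlier constraints, to blow up, and derive a contradiction with continuity of some $g_i$ at a limit point in $\R$.

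For $(1) \Rightarrow (3)$ the plan is to reduce the apparently much weaker hypothesis of a countable generating$^*$ family (over a countable set of operations) back to the finite basic case. The key observation is a Baire-category / compactness argument in $C^*(X)$ (or in the relevant restriction spaces): the set of $f \in C^*(X)$ representable using only the first $k$ members of $\mathbf{\Phi}$, the first $k$ operations, and compositions of bounded depth $k$ is, after suitable normalization, a closed subset of $C^*(K)$ for each compactum $K \subseteq X$; since these exhaust $C^*(X)$, by Baire one of them has nonempty interior, and a standard translation argument (adding coordinate functions absorbs affine shifts) then shows that finitely many $\Phi_i$ and operations already generate$^*$ all of $C^*(X)$. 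From a finite generating$^*$ family one recovers enough structure — point separation and the embedding into Euclidean space — to run the same embedding/dimension/local-compactness analysis as in $(2) \Rightarrow (3)$; the operations, being continuous maps between Euclidean subsets, do not destroy metrizability, finite-dimensionality, or the local compactness obstruction. The main obstacle throughout is the local compactness direction: showing that \emph{non}-local-compactness is a genuine obstruction to having any finite superposition representation, which I anticipate will require a careful, somewhat delicate construction of a witnessing function $f$ together with control of how the coordinate functions must behave near the "bad" point.
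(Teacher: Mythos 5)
Your high-level architecture is reasonable and overlaps the paper's in several places (the trivial direction via $M=\{+\}$, the diagonal embedding into $\R^m$, Sternfeld's lower bound on compacta plus a sum theorem for the dimension estimate, and a Baire-category reduction from countable to finite). But two of your key steps have genuine gaps. The most serious is the local compactness obstruction, which you correctly identify as the crux but for which your proposed mechanism does not work: you suggest picking $x_k \to p$ with ``the $\Phi_i(x_k)$ escaping to values where the $g_i$ are forced to blow up.'' Since each $\Phi_i$ is continuous, $\Phi_i(x_k) \to \Phi_i(p)$, so these values cannot escape, and no coordinate function is forced to be unbounded. The obstruction is not unboundedness but a failure of separation in the limit: the paper embeds the metric fan $F$ as a closed subspace of any non-locally-compact separable metrizable space, then uses a diagonal subsequence extraction (possible because the relevant $\Phi_i$-values lie in compact intervals) to produce, inside a single ``column'' of the fan, two interleaved closed discrete sequences on which every $\Phi_i$ appearing in the representation has the \emph{same} limit, while a suitably built continuous $f$ takes value $1/m$ on one and $0$ on the other. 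Continuity of the operations and coordinate functions then forces equal limits of $f$ on both sequences, a contradiction. Without some version of this ``identical $\Phi$-data, distinct $f$-data'' device, your sketch will not close.

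The second gap is in your $(1)\Rightarrow(3)$ reduction: you assert that the set of functions representable with the first $k$ generators, operations, and bounded composition depth is \emph{closed} in $C^*(K)$, so that non-meagerness gives nonempty interior. These sets are continuous images of $C_k(\R)^n$, hence analytic but not closed in general, and the paper has to invoke Pettis' theorem (a non-meager analytic subset $G$ of a Polish group has $G \ominus G$ with nonempty interior) together with a group structure on $C_k(X,(0,1))$ to conclude that one word $w$ already generates everything. Relatedly, the Polish-group framework requires knowing $X$ is locally compact \emph{before} running the category argument, so the fan argument must come first; your alternative of restricting to individual compacta $K$ avoids that but then yields a finite subfamily depending on $K$, with no evident way to extract a single finite family working globally. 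Finally, for $(3)\Rightarrow(2)$, note that the upgrade from basic$^*$ to basic is itself a substantial construction (the paper does it constructively via an exhaustion $K_m$, supports $L_s = K_{s+1}\setminus K_{s-1}^\circ$, and coordinate functions $g_i^s$ vanishing outside disjointly controlled intervals so that $\sum_s g_i^s$ is locally a finite sum); your one-sentence gesture at ``exhausting by compacta'' leaves this entirely to be done.
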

By the preceding note, 2) $\implies$ 1) is immediate. 
In the next section (Section~\ref{rest}) we prove 1) $\implies$ 3), in
Section~\ref{cons} we establish 3) $\implies$ 2), and we justify
the `Further' claim characterizing dimension in Section~\ref{dim}.

\section{Restrictions Induced by Generating$^*$ Families}\label{rest}

In this section, all topological spaces are $T_1$ and completely regular. 

\begin{lemma}\label{emb} Let $X$ have a generating$^*$ family
  $\mathbf{\Phi}$ with respect to $M$. Then $e : X \to
 \R^\mathbf{\Phi}$
 defined by  $e(x)=(\Phi(x))_{\Phi \in \mathbf{\Phi}}$ is an embedding.
\end{lemma}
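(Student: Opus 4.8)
The plan is to show that $e$ is a continuous injection that is a homeomorphism onto its image. Continuity is immediate: each coordinate $\Phi \in \mathbf{\Phi}$ is continuous, so $e$ is continuous by the universal property of the product topology on $\R^{\mathbf{\Phi}}$.

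For injectivity, suppose $x \neq y$ in $X$. Since $X$ is $T_1$ and completely regular, there is some $f \in C(X)$ (indeed in $C^*(X)$) with $f(x) \neq f(y)$. Because $\mathbf{\Phi}$ is generating$^*$ with respect to $M$, this $f$ can be written as a composition built from finitely many functions in $\mathbf{\Phi}$, operations in $M$, and maps in $C(\R)$; in particular $f(z)$ depends on $z$ only through the finite tuple $(\Phi_1(z),\ldots,\Phi_k(z))$ of $\mathbf{\Phi}$-values occurring in that expression. Hence if $e(x) = e(y)$, meaning $\Phi(x) = \Phi(y)$ for all $\Phi \in \mathbf{\Phi}$, then $f(x) = f(y)$, a contradiction. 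So $e$ is injective.

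The main point — and the only step that needs real care — is that $e^{-1}$ is continuous, i.e. $e$ is a homeomorphism onto $e(X)$. Equivalently, the topology on $X$ is the initial (weak) topology generated by $\mathbf{\Phi}$. I would argue as follows: let $U \subseteq X$ be open and $x \in U$; I must find a basic open neighbourhood of $e(x)$ in $e(X)$ contained in $e(U)$. By complete regularity pick $f \in C(X)$ with $f(x) = 1$ and $f \equiv 0$ on $X \setminus U$. Write $f$ as a composition using $\mathbf{\Phi}$, $M$, and $C(\R)$; this exhibits $f$ as a continuous function $F$ of the finitely many coordinates $\Phi_1, \ldots, \Phi_k$ appearing, namely $f(z) = F(\Phi_1(z), \ldots, \Phi_k(z))$ where $F : \R^k \to \R$ (on the relevant subset) is continuous — here I use that the operations in $M$ and the coordinate functions in $C(\R)$ are all continuous, so their composite is continuous. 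Then $\{\, z : |F(\Phi_1(z),\ldots,\Phi_k(z))| > 1/2 \,\}$ is open in the weak topology, contains $x$, and is contained in $U$. Since such sets (finitely many coordinates constrained to a preimage of an open subset of $\R^k$) are exactly the preimages under $e$ of basic open sets of $\R^{\mathbf{\Phi}}$ intersected with $e(X)$, this shows $e(U)$ is open in $e(X)$, completing the proof that $e$ is an embedding.

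I expect the only genuine subtlety to be bookkeeping about the domains: a composition representing $f$ may involve partial functions (operations in $M$ map subsets of Euclidean space to subsets of Euclidean space), so one should note that since the composite is defined and equals $f$ at every point of $X$, the relevant Euclidean-space maps are defined and continuous on the (open) set of values actually attained, which is all that is needed to produce $F$ continuous on a neighbourhood of $e(x)$ in $\R^k$. Beyond that, everything reduces to the standard fact that a space embeds into a product via a family of continuous maps precisely when that family separates points and generates the topology, and complete regularity plus the generating$^*$ hypothesis delivers both.
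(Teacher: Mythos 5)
Your proposal is correct and follows essentially the same route as the paper: continuity via the product's universal property, injectivity by representing a separating $f\in C^*(X)$ as a composition through finitely many $\Phi_i$, and openness onto the image by observing that every $f\in C^*(X)$ factors continuously through finitely many coordinates, so complete regularity forces the weak topology to coincide with the original one. The paper merely cites the standard completely-regular criterion where you unfold it into an explicit neighbourhood argument; the content is the same.
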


\begin{proof}
Clearly $e$ is continuous (each projection is a $\Phi$ in
$\mathbf{\Phi}$ which is continuous). It is also easy to see $e$ is
injective. Take distinct $x, x'$ in $X$. Pick $f \in C^*(X)$ such that
$f(x)=0$, $f(x')=1$. Represent $f$ as a composition of $\Phi_1 ,
\ldots , \Phi_n$ in $\mathbf{\Phi}$, members of $M$ and $C(\R)$. If
$e(x)=e(x')$ then
$\Phi_i(x)=\Phi_i(x')$ for all $i$, and so $f(x)=f(x')$, which is a
contradiction. 

It remains to show that the topology induced on $X$ by $e$ contains
the original topology. Since $X$ is completely regular it is
sufficient to check that for every $f \in C^*(X)$ the map $e(f) : e(X)
\to \R$ defined by $e(f)(\mathbf{x}) = f(e^{-1}(\mathbf{x}))$ is
continuous. But each $f \in C^*(X)$ can be written as a
composition of some $\Phi_1 , \ldots , \Phi_n$ in $\mathbf{\Phi}$ and
members of $M$ and $C(\R)$. Note that for each $i$ we have
$\Phi(e^{-1}(\mathbf{x}))= \pi_{\Phi_i} (\mathbf{x})$, where $\pi_{\Phi_i}$ is
the projection map of $\R^\mathbf{\Phi}$ onto the $\Phi_i$th co-ordinate.
Hence $e(f)=f \circ e^{-1}$ is the composition of continuous maps,
namely the $\pi_{\Phi_i}$s and functions in $M$ and $C(\R)$, and so is
 continuous as required.
\end{proof}

Since any subspace of $\R^{\mathbb{N}}$ is separable metrizable, and
any subspace of $\R^n$ is finite dimensional we deduce from
Lemma~\ref{emb}:
\begin{cor}\label{sm_fd} \ 

a) A space with a countable generating$^*$ family is separable
   metrizable.

b) A space with a finite generating$^*$ family is finite dimensional.
\end{cor}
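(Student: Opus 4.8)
The plan is to read both statements straight off the embedding produced in Lemma~\ref{emb}. Suppose $X$ has a generating$^*$ family $\mathbf{\Phi}$ with respect to some set of operations $M$. By Lemma~\ref{emb} the map $e : X \to \R^{\mathbf{\Phi}}$, $e(x) = (\Phi(x))_{\Phi \in \mathbf{\Phi}}$, is an embedding, so $X$ is homeomorphic to the subspace $e(X)$ of the Tychonoff product $\R^{\mathbf{\Phi}}$. Thus it suffices to observe that the relevant properties are inherited by subspaces of the ambient product.

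For part a), if $\mathbf{\Phi}$ is countable then, enumerating $\mathbf{\Phi}$, the product $\R^{\mathbf{\Phi}}$ is homeomorphic to $\R^{\N}$, which is separable and metrizable — one can exhibit an explicit metric such as $d(\mathbf{x},\mathbf{y}) = \sum_{k} 2^{-k}\min(1,|x_k - y_k|)$, or simply invoke that a countable product of separable metric spaces is separable metric. Since separability and metrizability are hereditary, the subspace $e(X) \cong X$ is separable metrizable.

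For part b), if $\mathbf{\Phi}$ is finite with $|\mathbf{\Phi}| = n$, then $\R^{\mathbf{\Phi}}$ is homeomorphic to $\R^{n}$. Every subspace of $\R^{n}$ has covering dimension at most $n$ (by monotonicity of dimension for subsets of Euclidean space), so $\dim e(X) \le n < \infty$, and hence $X$ is finite dimensional.

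There is essentially no obstacle here: all of the real content sits in Lemma~\ref{emb}, and the two supporting facts — that a countable power of $\R$ is separable metrizable, and that subspaces of $\R^{n}$ are finite dimensional — are classical. The only point worth stating explicitly is that separable metrizability and the bound $\dim \le n$ both pass to arbitrary subspaces, which is precisely why producing the embedding is enough.
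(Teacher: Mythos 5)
Your proposal is correct and follows exactly the paper's argument: the paper deduces both parts from Lemma~\ref{emb} together with the classical facts that every subspace of $\R^{\N}$ is separable metrizable and every subspace of $\R^{n}$ is finite dimensional. Your write-up just makes the hereditary nature of these properties explicit, which the paper leaves implicit.
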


A subspace $C$ of a space $X$ is said to be $C^*$-embedded in $X$ if
every $f \in C^*(C)$ can be extended to a continuous bounded real
valued function on $X$. In a normal space all closed subspaces are
$C^*$-embedded. Compact subspaces are always $C^*$-embedded.
We note the following easy lemma:
\begin{lemma}\label{subspace}
If $\mathbf{\Phi}$ is a generating$^*$ family (respectively,
  basic$^*$) for a space $X$ with
  respect to $M$, and $C$ is $C^*$-embedded in $X$ then
  $\mathbf{\Phi}|C = \{ \Phi|C : \Phi \in \mathbf{\Phi}\}$ is a
  generating$^*$ (respectively, basic$^*$) family for $C$.
\end{lemma}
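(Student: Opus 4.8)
The plan is to reduce the statement for $C$ to the hypothesised representation on $X$ by first extending a function off $C$ and then restricting the resulting representation back to $C$. So I would start with an arbitrary $f \in C^*(C)$. Since $C$ is $C^*$-embedded in $X$, there is some $\tilde f \in C^*(X)$ with $\tilde f|C = f$. By hypothesis $\mathbf{\Phi}$ is generating$^*$ for $X$ with respect to $M$, so $\tilde f$ can be written as a composition built out of finitely many $\Phi_1, \ldots, \Phi_n \in \mathbf{\Phi}$, maps from $M$, and maps from $C(\R)$.

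Next I would restrict this entire composition to $C$. The key observation is that restriction to $C$ commutes with composition: if $g$ is a map defined on $X$ (with values in some Euclidean space) and $h$ is a map defined on a subset of Euclidean space, then $(h\circ g)|C = h\circ(g|C)$, and likewise the $+$ operation and the formation of tuples of coordinate maps commute with restriction. Crucially, only the innermost maps $\Phi_i$ are functions with domain $X$; the maps drawn from $M$ and from $C(\R)$ have domains in Euclidean space and so are left completely unchanged. Hence, carrying out the restriction — formally by induction on the way the composition is built up — converts the given representation of $\tilde f$ into a representation of $\tilde f|C = f$ that uses exactly the maps $\Phi_1|C, \ldots, \Phi_n|C$ from $\mathbf{\Phi}|C$, the same maps from $M$, and the same maps from $C(\R)$. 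The one compatibility point to check is that the range of each restricted intermediate map still lies in the domain of the next map in the composition; but restricting a map only shrinks its range, so this is automatic. This shows that $\mathbf{\Phi}|C$ is generating$^*$ for $C$ with respect to $M$.

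Finally, the basic$^*$ assertion is just the special case $M=\{+\}$: if $\tilde f = \sum_{q=1}^{n} g_q\circ\Phi_q$ with $g_q \in C(\R)$ and $\Phi_q \in \mathbf{\Phi}$, then restricting to $C$ gives $f = \sum_{q=1}^{n} g_q\circ(\Phi_q|C)$, exhibiting $\mathbf{\Phi}|C$ as basic$^*$ for $C$. I do not anticipate a real obstacle here — the lemma is genuinely easy — and the only point that needs any care is making precise the claim that restriction distributes through a composition tree, which is a routine structural induction once one notes that the Euclidean-domain maps in $M\cup C(\R)$ require no modification.
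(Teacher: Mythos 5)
Your proof is correct: extending $f$ off $C$ via $C^*$-embeddedness, representing the extension, and restricting the composition back to $C$ (noting that only the innermost maps $\Phi_i$ have domain $X$) is exactly the intended argument. The paper itself omits the proof, calling the lemma easy, so there is nothing further to compare.
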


\begin{lemma}\label{lc} A  space with a countable generating$^*$
  family is locally compact. 
\end{lemma}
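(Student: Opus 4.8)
Since $X$ is separable metrizable by Corollary~\ref{sm_fd}, the plan is to prove the contrapositive: if $X$ is not locally compact then it has no countable generating$^*$ family. I would first isolate the obstruction as a closed subspace. If $X$ is not locally compact, fix $x_0$ with no compact neighbourhood; then each closed ball $\overline{B}(x_0,1/n)$ is non-compact, hence (being metrizable) contains a sequence $(y_{n,k})_k$ with no subsequence converging in $X$. The set $Y=\{x_0\}\cup\bigcup_n\{y_{n,k}:k\}$ is then a closed copy of the ``fan'': a point $x_0$ together with countably many clopen discrete columns $C_n=\{y_{n,k}:k\}$, a neighbourhood of $x_0$ being $\{x_0\}\cup\bigcup_{n\ge N}C_n$. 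As $X$ is normal, $Y$ is $C^*$-embedded in $X$, so by Lemma~\ref{subspace} it suffices to show this fan has no countable generating$^*$ family with respect to any set of operations.

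So suppose $\mathbf{\Phi}=\{\Phi_1,\Phi_2,\dots\}$ is a countable generating$^*$ family for the fan $Y$ with operations $M$. Reading off a superposition, each $f\in C^*(Y)$ has the form $f=G\circ\Psi$, where $\Psi=(\Phi_{i_1},\dots,\Phi_{i_l})$ is a \emph{finite} sub-tuple of $\mathbf{\Phi}$ (the only ingredients of a representation whose domain is $Y$) and $G$ is a composition of members of $M$ and $C(\R)$, hence continuous on its Euclidean domain, which contains $\Psi(Y)$. Thus $C^*(Y)=\bigcup_\Psi F_\Psi$ over the countably many finite sub-tuples, where $F_\Psi=\{f:\ f \text{ is constant on the fibres of }\Psi\text{ and the induced map }\Psi(Y)\to\R\text{ is continuous}\}$. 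Each $F_\Psi$ is a closed linear subspace of the Banach space $(C^*(Y),\|\cdot\|_\infty)$, and it is proper unless $\Psi$ is an embedding of $Y$ into $\R^l$ (if $\Psi$ is not injective, separate a pair in a fibre using complete regularity; if $\Psi$ is injective but not open, use the failure of continuity of $\Psi^{-1}$). A proper closed subspace is nowhere dense, so the Baire Category Theorem forces some finite sub-tuple $\Psi_0$ to embed $Y$ onto some $Y'\subseteq\R^m$.

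It remains to handle this ``embedding'' situation. Identifying $Y$ with $Y'\subseteq\R^m$, every other $\Phi_j$ becomes a continuous map on the subset $Y'$ of $\R^m$; adjoining these countably many maps to $M$ yields a countable operation set $M'$ for which the coordinate projections alone generate$^*$ $Y'$ — so every $f\in C^*(Y')$ equals $\tilde G|_{Y'}$ for some continuous $\tilde G$ defined on a subset of $\R^m$ containing $Y'$ and built from $M'\cup C(\R)$. Now $Y'$ is not locally compact, so (its closure in $\R^m$ being locally compact) there are $x_0\in Y'$ and points $t_n\in\overline{Y'}\setminus Y'$ with $t_n\to x_0$, and the columns may be chosen so that $C_n$ converges in $\R^m$ to $t_n$. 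The aim is to build $f\in C^*(Y')$ with $f(x_0)=0$ that oscillates along $C_n$ with amplitude $\varepsilon_n\to0$: for then, if $f=\tilde G|_{Y'}$ and $t_n\in\mathrm{dom}(\tilde G)$, continuity of $\tilde G$ at $t_n$ would force $f$ to converge along $C_n$ — a contradiction. Since $M'$ is countable, only countably many domains can arise for such $\tilde G$, and I would choose the $t_n$ and the columns by a diagonal argument against this list so that every scheme capable of representing a member of $C^*(Y')$ is defeated.

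The routine half is the Baire argument of the second paragraph. The main obstacle is the last step: a combining function $\tilde G$ can in principle oscillate boundedly near a point of $\overline{Y'}\setminus Y'$ lying outside its domain, so one must arrange the remainder points $t_n$ and the columns delicately enough — using \emph{only} that the operation set is countable — that no admissible scheme can imitate the oscillating $f$. Getting this bookkeeping right is where I expect the real work to lie.
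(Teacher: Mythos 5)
Your first paragraph matches the paper: reduce to a closed copy of the metric fan and invoke Lemma~\ref{subspace}. After that the routes diverge, and your argument has a genuine gap exactly where you say you expect ``the real work to lie.'' The Baire category step in your second paragraph is correct as far as it goes, but it buys nothing: the fan is countable, hence zero-dimensional, so it embeds in $\R$ (send $*$ to $0$ and the $n$th column to a discrete subset of $(1/(n+1),1/n)$), and so the conclusion that some finite sub-tuple $\Psi_0$ embeds $Y$ into Euclidean space is perfectly consistent with $Y$ being the fan and yields no contradiction. It merely transfers the entire burden to your third paragraph, which is a plan rather than a proof: you must defeat every admissible combining function $\tilde G$, including those whose domains omit the limit points $t_n$, and you explicitly leave open how the ``delicate bookkeeping'' is to be done. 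As written, the argument does not close.

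The idea you are missing is that no embedding, and no Baire category in $C^*(Y)$, is needed; the paper diagonalizes directly against the countable family inside the fan. By continuity of each $\Phi_i$ at $*$, choose shrinking basic neighborhoods $V_n$ of $*$ on which $\Phi_1,\dots,\Phi_n$ all take values in fixed bounded intervals. A single column inside $V_n$ is an infinite closed discrete set, and by nested subsequence extraction (sequential compactness of those intervals) one obtains an infinite $D^n=\{x^n_k : k\in\N\}$ on which each of $\Phi_1,\dots,\Phi_n$ converges as a sequence in $k$. Split $D^n$ into its odd- and even-indexed halves and let $f$ equal $1/n$ on the odd half of $D^n$ and $0$ elsewhere; this $f$ is continuous and bounded. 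If a representation of $f$ uses only $\Phi_1,\dots,\Phi_m$, then along $D^m$ the tuple $(\Phi_1,\dots,\Phi_m)$ converges to a single point, so both halves have the same image-limit and continuity of the operations forces $\lim_k f(x^m_{2k-1})=\lim_k f(x^m_{2k})$, contradicting $1/m\neq 0$. The crucial point is that the oscillating witnesses $D^n$ are manufactured from the family $\{\Phi_i\}$ \emph{before} any representation is chosen, so whichever finite subfamily a representation happens to use is already defeated; your scheme, which first fixes an embedding and then tries to out-manoeuvre the unknown domains of the combining functions, does not deliver this.
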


\begin{proof} Suppose the space $X$ has a countable  generating$^*$ family
$\mathbf{\Phi}$ with respect to $M$, but is not locally compact. Since
  $X$ is metrizable, it follows that the metric fan $F$ (defined
  below) embeds as a closed subspace in $X$. Hence by
  Lemma~\ref{subspace} it suffices to show that $F$ does not admit a
  countable 
  generating$^*$ family (with respect to any set of operations $M$).

The metric fan $F$ has underlying set $\{*\} \cup \left(\mathbb{N} \times
\mathbb{N}\right)$ and topology in which all points other
than $*$ are isolated and $*$ has basic neighborhoods $B(*,N) =\{*\}
\cup \left( [N,  \infty) \times \mathbb{N} \right)$. For a
  contradiction, let $\Phi=\{\Phi_1, \Phi_2, \ldots\}$ be a countable
  generating$^*$ family with respect to  $M$.

For each $i$, let $y_i=\Phi_i(x_0)$, and pick basic open $U_i$
containing $*$ such that $\Phi_i(U_i) \subseteq (y_1-1,y_1+1)$. Now
for each $n$ let $V_n = \bigcap_{i=1}^n U_i$. So $\Phi_i(V_n)
\subseteq (y_i-1,y_i+1)$ for $i=1, \ldots , n$. We can write $V_n =
\{*\} \cup \left([N_n,\infty) \times \mathbb{N}\right)$ and suppose,
  without loss of generality, that $N_n > N_m$ if $n > m$.

Fix $n$. Let $D^0= \{x_k^0= (N_n,k): k \in \mathbb{N}\}$. 
As $\{\Phi_1(x_k^0)\}_{k\in \N}$ is a subset of $[y_1-1,y_1+1]$,
which is sequentially compact, there is a $D^1=\{x_k^1:k\in
\N\}\subseteq D^0$ such that $\{\Phi_1(x_k^1)\}_{k\in\N}$ is
convergent. As $\{\Phi_2(x_k^1)\}_{k\in \N}$ is a subset of
$[y_2-1,y_2+1]$, which is sequentially compact, there is a
$D^2=\{x_n^2:n\in \N\}\subseteq D^0$ such that
$\{\Phi_2(x_k^2)\}_{k\in\N}$ is convergent. Inductively we get
$D^n=\{x_k^n:k\in\N\}$, which is infinite closed discrete and for
each $i=1,...,n$ the sequence $\{\Phi_i(x_k^n)\}_{k\in\N}$ is
convergent, say to $z_i^n$. Define $D^n_O = \{x_{2k-1}^n : k \in
\mathbb{N}\}$ and $D^n_E = \{ x_{2k}^n : k \in \mathbb{N}\}$.

Define $f: F \to [0,1]$ by: $f$ is identically zero outside $\bigcup_n
D^n_O$ (in particular, $f$ is zero on each $D^n_E$), and $f$ is
identically $1/n$ on $D^n_O$. Then $f$ is continuous and bounded.

Hence, for some $m$,  $f$ can be written as the composition of 
$\Phi_1, \ldots , \Phi_m$ and members of $M$ and $C(\R)$. Now, on the
one hand $\lim_k \Phi_i (x_{2k-1}^m)=z_{i.m} = \lim_k \Phi_i
(x_{2k}^m)$ so by continuity of the elements of $M$ and $C(\R)$ in the
compositional representation of $f$, $\lim_k f(x_{2k-1}^m) = \lim_k
f(x_{2k}^m)$, and on the other hand, $\lim_k f(x_{2k-1}^m) = 1/m \ne 0
= \lim_k f(x_{2k}^m)$. This is our desired contradiction.
\end{proof}

Let $Y$ be a locally compact separable metrizable space. 
Write $C_k(Y)$ for $C(Y)$ with the compact-open topology. Then
$C_k(Y)$ is a Polish (separable, completely metrizable) group. In
particular, for any $n$, $C_k(\R)^n$ is a Polish group.
 
\begin{lemma}\label{fg}
If $X$ has a countable generating$^*$ family with respect to a
countable set of operations, $M$, then $X$ has a finite generating$^*$
family with respect to a finite set of operations $M'$.
\end{lemma}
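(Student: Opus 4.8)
The plan is to run a Baire-category and descriptive-set-theory argument inside the Polish group $C_k(X)$. First, by Corollary~\ref{sm_fd}(a) and Lemma~\ref{lc} the hypothesis forces $X$ to be locally compact, separable and metrizable (in particular $\sigma$-compact), so, as recorded just before the lemma, $C_k(X)$ is a Polish group; moreover it is a topological vector space, hence abelian under addition and connected. I would also note at the outset that the hypothesis in fact makes \emph{every} $f\in C(X)$, not only every bounded one, a composition of members of $\mathbf{\Phi}$, $M$ and $C(\R)$: choosing a homeomorphism $g\colon\R\to(-1,1)$, the bounded function $g\circ f$ has such a composition, and then $f=h\circ(g\circ f)$, where $h\in C(\R)$ is any continuous extension of $g^{-1}$.

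Next, enumerate $\mathbf{\Phi}=\{\Phi_1,\Phi_2,\dots\}$ and $M=\{\mu_1,\mu_2,\dots\}$, and put $M_n=\{\mu_1,\dots,\mu_n\}\cup\{+\}$. Let $P_n\subseteq C(X)$ be the set of functions expressible as a composition of $\Phi_1,\dots,\Phi_n$, members of $M_n$, and members of $C(\R)$, with no bound on the size of the composition. By the previous paragraph $C(X)=\bigcup_n P_n$, and $P_n\subseteq P_{n+1}$. Two properties are key. First, each $P_n$ is a \emph{subgroup} of $C_k(X)$: it contains the constant $0$, it is closed under $f\mapsto -f$ (post-compose with $t\mapsto -t$), and it is closed under addition exactly because $+\in M_n$ (given compositions for $f$ and $g$, feed them as the two inputs of a final ``$+$'' node). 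Second, each $P_n$ is an \emph{analytic} subset of $C_k(X)$, and so has the Baire property.

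To see the analyticity, for each finite ``composition scheme'' $S$ built from $\Phi_1,\dots,\Phi_n$ and $\mu_1,\dots,\mu_n,+$ and carrying a fixed number $l=l(S)$ of free slots to be filled by members of $C(\R)$, let $\Psi_S$ be the map sending a tuple $\mathbf{g}=(g_1,\dots,g_l)\in C_k(\R)^l$ — for which all the intermediate domain constraints imposed by the $\mu_i$ are met — to the resulting function in $C_k(X)$. Since $X$ and the Euclidean spaces in question are locally compact, evaluation and composition are jointly continuous for the compact-open topologies, so $(\mathbf{g},x)\mapsto\Psi_S(\mathbf{g})(x)$ is continuous on the set where it is defined; using in addition that $X$ is separable and $\sigma$-compact (and that the domains of the $\mu_i$ are Borel) one checks routinely that the domain of $\Psi_S$ is Borel in $C_k(\R)^l$ and that $\mathrm{range}(\Psi_S)$ is analytic in $C_k(X)$. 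As $P_n$ is the union, over the countably many such schemes $S$, of the sets $\mathrm{range}(\Psi_S)$, it is analytic. I expect this descriptive-set-theoretic bookkeeping — in particular handling the partial domains of those $\mu_i$ that are not defined on all of Euclidean space — to be the only real obstacle; the rest is soft.

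Finally, since the Polish space $C_k(X)$ equals $\bigcup_n P_n$ with each $P_n$ having the Baire property, the Baire Category Theorem yields an $N$ with $P_N$ non-meager. A subgroup of a topological group that is non-meager and has the Baire property is open (by Pettis's theorem, $P_N-P_N=P_N$ contains a neighbourhood of $0$), and an open subgroup is also closed, hence all of the connected group $C_k(X)$. Thus $P_N=C(X)\supseteq C^*(X)$, i.e. every $f\in C^*(X)$ is a composition of $\Phi_1,\dots,\Phi_N$, members of $M_N=\{\mu_1,\dots,\mu_N,+\}$, and members of $C(\R)$. Hence $\{\Phi_1,\dots,\Phi_N\}$ is a finite generating$^*$ family — in fact generating — for $X$ with respect to the finite operation set $M'=M_N$, which is what was required.
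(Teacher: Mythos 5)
There is a genuine gap at the very first step of your argument, and it is load-bearing. You claim that every $f\in C(X)$, bounded or not, is a composition of members of $\mathbf{\Phi}$, $M$ and $C(\R)$, by writing $f=h\circ(g\circ f)$ with $h\in C(\R)$ ``any continuous extension of $g^{-1}$''. But $g^{-1}\colon(-1,1)\to\R$ has no continuous extension to $\R$: it is unbounded near $\pm1$, and if $f$ is unbounded the range of $g\circ f$ accumulates at $1$ or $-1$, so any $h\in C(\R)$ agreeing with $g^{-1}$ on that range would be unbounded on a neighbourhood of an endpoint. Consequently you are only entitled to $C^*(X)\subseteq\bigcup_n P_n$, not $C(X)=\bigcup_n P_n$. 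That is fatal to the Baire category step: for non-compact $X$ the set $C^*(X)=\bigcup_m\{f:\|f\|_\infty\le m\}$ is a countable union of closed sets with empty interior in $C_k(X)$, hence meager, so knowing that the $P_n$ cover $C^*(X)$ does not force any $P_N$ to be non-meager in $C_k(X)$. Nor can you retreat to $C^*(X)$ with the compact-open topology: a dense non-closed subgroup of a Polish group cannot be $G_\delta$, so it is not Polish and the Baire Category Theorem is unavailable there.

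The paper circumvents exactly this difficulty by running the category argument in $C_k(X,(0,1))$, which is homeomorphic to $C_k(X)$ and hence Polish, and every element of which is bounded and therefore covered by the analytic sets supplied by the generating$^*$ hypothesis. A group operation is transported to $(0,1)$ from $(\R,+)$ via a homeomorphism $h\colon\R\to(0,1)$, giving operations $\oplus,\ominus$ that are adjoined to $M'$; Pettis' theorem then makes the generated subgroup open, hence equal to the connected group $C_k(X,(0,1))$, and an arbitrary bounded $f$ is handled by conjugating with a self-homeomorphism $g_0$ of $\R$ carrying $(0,1)$ onto an interval containing the range of $f$. The rest of your argument (making the pieces subgroups by adjoining $+$, analyticity of the ranges of the composition schemes, Pettis, connectedness) parallels the paper and would go through once the category argument is relocated to $C_k(X,(0,1))$; as written, however, the proof does not close, and the final assertion that the family is in fact generating (not merely generating$^*$) rests on the same unjustified claim.
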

\begin{proof}
Let $\Phi_1, \Phi_2, \ldots$ be a countable generating$^*$ family for
$X$ with respect to the countable set of operations $M$. By
Lemma~\ref{lc} $X$ is locally compact and $C_k(X)$ is a Polish group.

Let $g_1,
g_2, \ldots$ be formal letters representing functions from $\R$ to
$\R$. Let $\mathcal{W}$ be the set of all formal compositions of
$\Phi_i$s, elements of $M$ and $g_i$s. Note that $\mathcal{W}$ is
countable.

Fix $w$ in $\mathcal{W}$. Then $w$  induces a map $(g_1, \ldots , g_n) \mapsto
w(g_1,\ldots , g_n)$ from $C_k(\R)^n \to C_k(X)$ where we substitute
actual $g_i \in C(\R)$ for the corresponding formal letter. This map
is continuous with respect to the compact-open topology.
Let $F_w = w(C_k(\R)^n)$. It is analytic. Define $G_w = F_w \cap
C_k(X,(0,1))$. Since $C_k(X,(0,1))$ is homeomorphic to $C_k(X)$ it is
Polish, and hence must be a $G_\delta$ subset of $C_k(X)$. So $G_w$ is
analytic in $C_k(X,(0,1))$.

Note, by the generating$^*$ property,  that $C_k^*(X) \subseteq
\bigcup_{w \in \mathcal{W}} F_w$. Hence $C_k(X,(0,1)) = \bigcup_{w \in
  \mathcal{W}} G_w$. By the Baire Category theorem there must be some
particular $w$ in $\mathcal{W}$ such that $G_w$ is not meager.

Fix a homeomorphism $h:\R \to (0,1)$. Via $h$, addition and
subtraction on $\R$ induce (continuous) group operations $\oplus ,
\ominus : (0,1) \times (0,1) \to (0,1)$. These operations on $(0,1)$
in turn induce operations on $C_k(X,(0,1))$ making this space a Polish
group.

Let $H_w$ be the subgroup of $C_k(X,(0,1))$ generated by $G_w$. By
Pettis' Theorem \cite{Pettis}, since $G_w$ is non-meager and analytic,
$G_w \ominus G_w$ has non-empty interior. Hence the subgroup $H_w$ is
open, and so coincides with $C_k(X,(0,1))$ (which is connected). 

Set $\mathbf{\Phi}'$ to be the finite set of $\Phi_i$s appearing in
$w$, and set $M'$ to be $\oplus, \ominus$ and the finite set of
elements of $M$ appearing in $w$.  Since $H_w=C(X,(0,1))$, each element
of $C(X,(0,1))$ is a composition of members of $\mathbf{\Phi}'$, $M'$
and $C(\R)$.

We check $\mathbf{\Phi}'$ is a finite
generating$^*$ family with respect to $M'$. For if $f \in C^*(X)$,
then $f$ maps into some open interval $(a,b)$. Fix a homeomorphism
$g_0 : \R \to \R$ taking $(0,1)$ to $(a,b)$. Then $f = g_0 \circ \left(
g_0^{-1} \circ f\right)$, where $g_0^{-1} \circ f$ is in
$C_k(X,(0,1))$. Hence $g_0^{-1} \circ f$ can be expressed as a
composition of elements of $\mathbf{\Phi}'$, $M'$ and some $g_1,
\ldots , g_n$ in $C(\R)$. But now $f$ is $g_0$ of this composition and
so is also expressible in terms of elements of $\mathbf{\Phi}'$, $M'$
and $C(\R)$, as required.
\end{proof}

We note that the finite generating$^*$ family is a subset of the
original family, and also that if the original family is generating
then we can take $M' \subseteq M \cup \{+,-\}$.

\medskip

\begin{proof}[Proof of 1) $\implies$ 3)]

Let $X$ be a space with a countable generating$^*$ family with respect
to a countable set of operations. By Corollary~\ref{sm_fd}~a) $X$ is
separable metrizable. 
Lemma~\ref{lc} then says that $X$ is locally compact. From
Lemma~\ref{fg} we deduce that $X$  has a finite
generating$^*$ family. Hence by  Corollary~\ref{sm_fd}~b) $X$ is finite
dimensional. 
\end{proof}

\section{Construction of Finite Basic Families}\label{cons}
This section is devoted to proving:
\begin{lemma}~\label{cons_basic} If $X$ is a locally compact,
  separable metrizable space of dimensiosn $\le n$ then $X$ has a
  basic family of size $2n+1$.
\end{lemma}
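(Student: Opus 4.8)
The plan is to reduce the general locally compact separable metrizable case to the compact case (where Ostrand's extension of the Kolmogorov–Sternfeld theorem applies), and then carefully glue the finitely many ``local'' basic families into a single global one using a suitable exhaustion.

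\medskip

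First I would fix a compatible metric and write $X = \bigcup_{k\in\N} K_k$ as an increasing union of compact sets with $K_k \subseteq \mathrm{int}(K_{k+1})$; local compactness makes this possible. Each $K_k$ is a compact metrizable space of dimension $\le n$, so by Ostrand's theorem it has a basic family $\Phi^{(k)}_1,\dots,\Phi^{(k)}_{2n+1}$ of size $2n+1$. The real work is to assemble a single family $\Phi_1,\dots,\Phi_{2n+1}$ on all of $X$ so that the representation $f = \sum_{i=1}^{2n+1} g_i\circ\Phi_i$ holds globally. The standard device (this is essentially Hattori's technique, here done for $C(X)$ rather than $C^*(X)$) is to partition $\N$ into $2n+1$ infinite sets, or more cleanly to choose the $\Phi_i$ so that on the ``annulus'' $A_k = K_{k+1}\setminus \mathrm{int}(K_k)$ the tuple $(\Phi_1,\dots,\Phi_{2n+1})$ restricted to $A_k$ behaves like a basic family for that compact piece, while the images $\Phi_i(A_k)$ for different $k$ are pushed into pairwise disjoint compact intervals of $\R$ marching off to infinity. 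Concretely I would post-compose the local families with homeomorphic embeddings of $\R$ into disjoint bounded intervals $I_k\subset\R$, using a continuous ``level'' function (something like the distance from $X\setminus K_{k+1}$ normalized) to interpolate between consecutive annuli so the resulting $\Phi_i$ are genuinely continuous on $X$.

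\medskip

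Given such $\Phi_i$, for $f\in C(X)$ one defines $g_i$ piecewise on each interval $I_k$ using the coordinate functions supplied by the basic family for the relevant compact piece, and checks that the pieces match up at the overlaps (this is where the interpolation on the annuli is used: on an overlap region both representations must be available and must be reconciled, typically by an inductive bookkeeping argument adjusting the $g_i$ on $I_{k+1}$ to agree with what has already been committed on $I_k$). One must also ensure $g_i$ extends continuously across the gaps between the $I_k$'s — harmless since one is free to define $g_i$ by linear interpolation (or constants) there — and that $g_i$ so defined is continuous at $\infty$; since $f$ need not be bounded, the intervals $I_k$ should be chosen so that $\bigcup_k I_k$ is a closed unbounded subset of $\R$ and the values of the $g_i$ are controlled by the values of $f$ on $K_{k+1}$, which is fine because there is no boundedness constraint.

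\medskip

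The main obstacle is precisely this gluing/bookkeeping step: making the finitely many compact basic families cohere into one global family while (a) keeping all $\Phi_i$ continuous across the annuli $A_k$ where two local pictures must be blended, and (b) arranging the coordinate functions $g_i$ to be consistently definable on the overlaps. The degree-bound $2n+1$ is what forces us to reuse the same $2n+1$ ``slots'' for every compact piece rather than taking a disjoint union of families, so the disjoint-intervals trick is essential; getting the interpolation on the annuli to preserve the basic-family property of the local pieces (rather than merely the embedding property) is the delicate point, and I expect it to require invoking Ostrand's theorem in a slightly strengthened or parametrized form, or re-running its proof in the locally compact setting directly.
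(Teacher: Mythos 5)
Your high-level strategy---compact exhaustion, annular pieces, images of the $\Phi_i$ marching off to infinity, and coordinate functions assembled as a locally finite sum---is the right one and matches the paper's in outline. But the proposal has a genuine gap at exactly the point you flag yourself: you propose to take \emph{independently constructed} Ostrand basic families on each compact piece and then blend them on the overlaps, and you concede that you do not know how to make the interpolation ``preserve the basic-family property of the local pieces (rather than merely the embedding property).'' That is not a technicality to be deferred; it is the entire content of the lemma. A convex or level-function blend of two unrelated basic tuples has no reason to be basic on the blending region, because the basic property rests on the combinatorics of Ostrand's discrete covering families, not on any pointwise or metric property of the functions. The paper does not glue pre-existing local families at all: it constructs the $\Phi_i$ \emph{globally from scratch}, as uniform limits of inductively defined approximations $f_k^i$ built from Ostrand's covering characterization of dimension (discrete families of small open sets refining the annular cover $\{U_m\}$, with values pinned near $m$ on the shell $H_m$ via number-theoretic spacing by primes $r_k^i$), so that the restriction to each compact $K_{s+1}$ is \emph{automatically} an Ostrand basic family---no reconciliation of independently chosen families is ever needed. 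Your fallback remark that one may have to ``re-run [Ostrand's] proof in the locally compact setting directly'' is precisely what the paper does, and it is the part your proposal omits.

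There is also a concrete flaw in the disjoint-intervals device as stated. Consecutive annuli $A_k=K_{k+1}\setminus \mathrm{int}(K_k)$ and $A_{k+1}$ share the boundary $\partial K_{k+1}$, so a continuous $\Phi_i$ cannot send $A_k$ and $A_{k+1}$ into pairwise disjoint compact intervals $I_k$, $I_{k+1}$. The paper instead allows the images of the pieces $L_s=K_{s+1}\setminus K_{s-1}^\circ$ to lie in \emph{overlapping} intervals $[s-2,\,s+2+1/4]$ and relies only on local finiteness (each real number lies in boundedly many of these intervals) to sum the $g_i^s$. Relatedly, your claim that the $g_i$ can be defined ``harmlessly'' by interpolation on the gaps between the $I_k$ is not safe: points in your blending zones have $\Phi_i$-values landing exactly in those gaps, and there the identity $f(x)=\sum_i g_i(\Phi_i(x))$ still constrains the $g_i$, so they cannot be chosen freely.
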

Lemma~\ref{cons_basic} is the forward implication of the `Further' claim in the
Main Theorem. The implication `3) $\implies$ 2)' of the Main Theorem
then follows. 

Recall that Hattori \cite{Hat} showed that  every
locally compact, separable metrizable  space $X$ of dimension $n$ has
a finite basic$^*$ family of size $2n+1$. Lemma~\ref{cons_basic} and
its 
proof  improves on Hattori's result and proof because: (1) it applies to all
functions (not necessarily bounded), (2) it is constructive (Hattori's
argument uses a Baire category argument) and (3) it is considerably
shorter than Hattori's. The proof is similar to that of Ostrand for
{\sl compact} metric  spaces. However difficulties arise because
continuous real valued functions on a locally compact space need not
be {\sl bounded}.

For this section, fix a locally compact, separable space
$X$ of dimension 
$\le n$, and with compatible metric $d$. We can find $\{K_m: m\ge
-1\}$  a countable cover of $X$ by compact sets such that
$K_{-1}=K_0=\emptyset$ and $K_m\subseteq K_{m+1}^0$ for each $m \ge
-1$. 
For each $m\ge 0$ we put $H_m=K_m\setminus K_{m-1}^\circ$, 
and set $U_m=K_{m+1}^\circ\setminus K_{m-1}$.
Since Ostrand has done the compact case, we can assume that the
$K_m$'s are {\sl strictly} increasing. We show $X$ has a basic family
of size $2n+1$.

The basic functions $\Phi_i$ are defined to be the limit of
approximations $f_k^i$. The approximations are defined inductively
along with some families of `nice' covers. These `nice' covers come
from  Ostrand's \cite{Ost} characterization of dimension: 
\begin{theorem}[Ostrand's Covering Theorem]  A metric space $Y$ of
  dimension $\leq n$ if and only 
if for each open cover $\mathcal{C}$ of $Y$ and each integer $k\geq
n+1$ there exist $k$ discrete families of open sets $\mathcal{U}_1,
\ldots , \mathcal{U}_k$ such that the union of any $n+1$ of the
$\mathcal{U}_i $ is a cover of $Y$ which refines $\mathcal{C}$.
\end{theorem}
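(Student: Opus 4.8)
The statement has two directions, and I would dispatch the reverse one first because it is immediate. Suppose that for $k=n+1$ there are discrete open families $\mathcal{U}_1,\ldots,\mathcal{U}_{n+1}$ whose union (the only $(n+1)$-fold union available) covers $Y$ and refines $\mathcal{C}$. Since each $\mathcal{U}_i$ is discrete, every point has a neighborhood meeting at most one member of $\mathcal{U}_i$, hence lies in at most one member of $\mathcal{U}_i$; so each point lies in at most $n+1$ members of $\bigcup_i \mathcal{U}_i$. As $\mathcal{C}$ was arbitrary, every open cover of $Y$ has an open refinement in which each point lies in at most $n+1$ members, which is the standard covering definition of $\dim Y\le n$. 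For the forward direction I would first reformulate the target: the condition ``the union of any $n+1$ of $\mathcal{U}_1,\ldots,\mathcal{U}_k$ covers $Y$'' is equivalent, for each point $y$, to demanding that the set of indices $i$ with $y\in\bigcup\mathcal{U}_i$ meet every $(n+1)$-subset of $\{1,\ldots,k\}$, i.e. that $y$ lie in members of at least $k-n$ of the families. So the forward direction reduces to producing $k$ discrete open families, all refining $\mathcal{C}$, so that every point of $Y$ lies in at least $k-n$ of them.

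Assume now $\dim Y\le n$, fix $\mathcal{C}$ and $k\ge n+1$, and set $r=k-n$. The plan is to build an open cover $\mathcal{U}$ refining $\mathcal{C}$ whose order is controlled from both sides---every point lies in at least $r$ and at most $k$ members---and then to color $\mathcal{U}$ with $k$ colors so that each color class is a discrete family. I would obtain the ``thick'' cover by iterating the covering property of dimension: metrizability supplies a sequence of open refinements of $\mathcal{C}$ with mesh tending to zero, and repeatedly applying ``each open cover of an $n$-dimensional metric space has an open refinement in which each point lies in at most $n+1$ members,'' together with a shrinking and swelling step, lets me overlap $r$ successive layers so that each point is genuinely covered at least $r$ times while arranging the overlaps in general position so that the local multiplicity stays bounded by $k$.

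The heart of the argument, and the step I expect to be the main obstacle, is to decompose this cover into exactly $k$ discrete families while preserving the per-point redundancy. Because the cover has order at most $k$, a proper coloring of its intersection pattern (the nerve) with $k$ colors would automatically place the $\ge r$ members through any point into $\ge r$ distinct color classes, yielding the redundancy; the trouble is that $k$-colorability does not follow from bounded clique number alone, so the coloring cannot simply be applied after the fact. I would resolve this the way Ostrand does, by building the coloring simultaneously with the cover: using the $\sigma$-discrete structure of refinements in a metric space (Stone's theorem), I would assign colors level by level along the mesh-decreasing sequence, at each stage choosing the shrunken sets and their colors so that equally colored sets are pulled apart into a discrete family and so that the redundancy accumulated at earlier levels is never destroyed. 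Verifying that this simultaneous construction can always be carried out---keeping same-colored sets discrete, keeping every stage refining $\mathcal{C}$, and still forcing every point into at least $k-n$ colors---is the genuinely technical part; the remainder is bookkeeping. I would finish by noting that the union refines $\mathcal{C}$ (immediate, since every stage does) and translating ``each point lies in at least $k-n$ colors'' back into ``any $n+1$ of the families cover $Y$,'' which completes the forward direction.
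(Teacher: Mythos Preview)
The paper does not prove Ostrand's Covering Theorem at all: it is quoted verbatim from Ostrand's 1965 paper \cite{Ost} and used as a black box in the proof of Lemma~\ref{ind_step}. So there is no ``paper's own proof'' to compare your proposal against.

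As for the proposal itself: your reverse direction is correct and complete, and your reformulation of the forward target (every point must lie in members of at least $k-n$ of the families) is exactly right. But you have not actually given a proof of the forward direction. You describe a plan---build a cover with multiplicity between $r=k-n$ and $k$, then $k$-color its nerve so that each color class is discrete---and you correctly diagnose the obstruction (bounded clique number does not give $k$-colorability), and then you say you would ``resolve this the way Ostrand does'' by building the coloring simultaneously with the cover. That sentence is where the entire content of the theorem lives, and you have only asserted that such a simultaneous construction succeeds, not carried it out. In particular, the delicate point in Ostrand's argument is an induction on $k$ (for fixed $n$): one starts from the base case $k=n+1$, which is the classical covering characterization of dimension, and at the inductive step produces the $(k+1)$st family by carefully splitting and re-shrinking pieces of the existing $k$ families so that discreteness is preserved and every point picks up one more color. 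Your ``overlap $r$ successive layers with mesh tending to zero'' outline does not obviously control the intersection pattern well enough to make the coloring go through, and you yourself flag this as ``the genuinely technical part.'' Until that step is written out, what you have is a strategy, not a proof.
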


\begin{lemma}\label{ind_step}
 Let $\gamma>0$. There are $2n+1$ many families
  $\mathcal{S}^1, \ldots , \mathcal{S}^{2n+1}$ of 
open subsets of $X$, and $\eta^m >0$  for $m \ge 0$, satisfying:

  (1) Each $\mathcal{S}^i$ is discrete in X.

  (2) For $k$ fixed and each $x\in X$ fixed, $|\{S\in
\bigcup_{i=1}^{2n+1}\mathcal{S}^i:x\in S\}|\geq   n+1$.

  (3) $\diam S<\gamma$ for any $S\in \bigcup_{i=1}^{2n+1}\mathcal{S}^i$.

  (4) $\bigcup_{i=1}^{2n+1}\mathcal{S}^i$ refines $\{U_m: m\in
  \omega\}$.

  (5) For any $m\in \N$, $\{S: S\in\bigcup_{i=1}^{2n+1}\mathcal{S}^i,
  S\cap K_m\neq \emptyset\}$ is   finite.

  (6) $S(H_m,\eta^m)\cap S=\emptyset \text{ if }
 H_m\cap \cl{S}=\emptyset \text{ for any  }
 S\in\bigcup_{i=1}^{n+1}\mathcal{S}^i$.  
 
  (7) $\cl{S(H_{m-1},\eta^{m-1})}\cap
\cl{S(H_{m+1},\eta^{m+1})}=\emptyset$.

\noindent In (6) and (7),  $S(H_m,\eta^m)=\{x \in X : d(H_m,x)\leq  \eta^m\}$ 
\end{lemma}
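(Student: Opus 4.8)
\medskip
\noindent\textbf{Proof strategy.}
The plan is to read the families $\mathcal{S}^1,\ldots,\mathcal{S}^{2n+1}$ straight off Ostrand's Covering Theorem, and then to choose the numbers $\eta^m$ afterwards, by hand. First I would fix an open cover $\mathcal{C}$ of $X$ every member of which has diameter $<\gamma$ and is contained in some $U_m$; for instance $\mathcal{C}=\{\,B_d(x,\gamma/3)\cap U_m : m\in\omega,\,\, x\in U_m\,\}$ works, since the $U_m$ cover $X$. Applying Ostrand's theorem to $\mathcal{C}$ with $k=2n+1$ (which is $\ge n+1$) produces discrete families $\mathcal{S}^1,\ldots,\mathcal{S}^{2n+1}$ of open sets the union of any $n+1$ of which is a cover of $X$ refining $\mathcal{C}$. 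Properties (1), (3) and (4) are then immediate, since every member of every $\mathcal{S}^i$ occurs in the union of some $n+1$ of the families and hence refines $\mathcal{C}$. Property (5) holds because a discrete family has only finitely many members meeting the compact set $K_m$, and there are only $2n+1$ families. For property (2): if some $x\in X$ lay in members of at most $n$ of the families, then $n+1$ of the remaining families would form a cover of $X$ with $x$ in no member, a contradiction; and discreteness forces $x$ into at most one member of each $\mathcal{S}^i$, hence into at least $n+1$ members of $\bigcup_{i=1}^{2n+1}\mathcal{S}^i$.

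Choosing the $\eta^m$ is where the work lies. Each $H_m=K_m\setminus K_{m-1}^\circ$ is closed in the compact set $K_m$, hence compact, and $H_{m-1}\cap H_{m+1}=\emptyset$ (because $H_{m-1}\subseteq K_{m-1}\subseteq K_m^\circ$ while $H_{m+1}$ is disjoint from $K_m^\circ$), so $d(H_{m-1},H_{m+1})>0$, with the convention that distances involving $\emptyset$ are $+\infty$ (recall $K_{-1}=K_0=\emptyset$). The crux is to verify that, for every $m$,
\[
\epsilon_m:=\inf\{\,d(H_m,S): S\in\mathcal{S}^1\cup\cdots\cup\mathcal{S}^{n+1},\,\, \cl{S}\cap H_m=\emptyset\,\} > 0 .
\]
Here I would use that $\mathcal{S}^1\cup\cdots\cup\mathcal{S}^{n+1}$ is locally finite, so the compact set $H_m$ has an open neighbourhood $W$ meeting only finitely many members $S_1,\ldots,S_r$ of that family: any $S$ disjoint from $W$ has $d(H_m,S)\ge d(H_m,X\setminus W)>0$, while each $S_j$ with $\cl{S_j}\cap H_m=\emptyset$ has $d(H_m,S_j)=d(H_m,\cl{S_j})>0$ because $H_m$ is compact and disjoint from the closed set $\cl{S_j}$; thus $\epsilon_m$ is the minimum of finitely many positive numbers.

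Finally I would define the $\eta^m$ recursively in $m$: at stage $m$, having already chosen $\eta^0,\ldots,\eta^{m-1}$, pick $\eta^m>0$ so small that $\eta^m<\epsilon_m$ and $2\eta^m<d(H_j,H_{j+2})$ for $j\in\{m-2,m\}$. The bound $\eta^m<\epsilon_m$ gives property (6): if $\cl{S}\cap H_m=\emptyset$ with $S$ in one of the first $n+1$ families, then every $s\in S$ has $d(H_m,s)\ge\epsilon_m>\eta^m$, so $S(H_m,\eta^m)\cap S=\emptyset$. The bounds imposed at stages $p$ and $p+2$ together force $\eta^p+\eta^{p+2}<d(H_p,H_{p+2})$ for every $p$, which is property (7), since the sets $S(H_{m\pm1},\eta^{m\pm1})$ are already closed and the instances with $H_{m-1}=\emptyset$ are vacuous. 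The one genuine obstacle here is the positivity of $\epsilon_m$: a single $\eta^m$ has to work for the possibly infinitely many sets $S$ in the first $n+1$ families whose closures miss $H_m$, and that is exactly what local finiteness together with compactness of $H_m$ rescues. Note finally that the various upper bounds on each $\eta^m$ are all compatible — each merely asks $\eta^m$ to be small — so the recursion never stalls.
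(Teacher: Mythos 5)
Your proof is correct and follows essentially the same route as the paper: apply Ostrand's Covering Theorem to a cover of small mesh refining $\{U_m\}$, get (1)--(5) from discreteness and compactness of the $K_m$, and then choose the $\eta^m$ using the positive distance from the compact set $H_m$ to the union of the closures of the relevant $S$'s (you via local finiteness of the union of the families, the paper via discreteness of each family separately plus a minimum over $i$) and to $H_{m\pm 2}$. The only divergence worth noting is that the paper's cover $\mathcal{C}$ is additionally chosen so that each $\cl{C_a}$ meets at most two of the sets $H_m$; this is not needed for properties (1)--(7) but is used later in the construction of the approximations, so your weaker choice of $\mathcal{C}$ is adequate for the lemma as stated.
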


\begin{proof}  Let $\mathcal{C}=\{C_a: a\in \N\}$ be a
locally finite open cover of $X$ with: $\diam (C_a)< \gamma$ and
$|\{H_m: H_m \cap \cl{C_a}\neq \emptyset\}|\leq 2$, for each $a\in\N$.
Then by Ostrand's covering theorem, there exist $2n+1$ discrete
families of open sets $\mathcal{S}_1, \cdots, \mathcal{S}_{2n+1}$
which refines $\mathcal{C}$. Also the union of any $n+1$ of the
$\mathcal{S}_i$ is a cover of $X$.  So 1)-4) are easy to verify.

Fix $i$ with $1\leq i\leq 2n+1$. As $\mathcal{S}^i$ is discrete,
 $\{S: S\cap K_m\neq \emptyset, S\in \mathcal{S}^i\}$ is
finite. Thus condition 5) is satisfied.

Now fix $i$ and $m$, the discreteness of $\mathcal{S}^i$ guarantees
that $$H_m\cap \cl{\bigcup \{S: S\in \mathcal{S}^i \text{ and }
H_m\cap \cl{S}=\emptyset\}} =\emptyset.$$ So $d(H_m,\cl{\bigcup \{S:
S\in \mathcal{S}^i \text{ and } H_m\cap \cl(S)= \emptyset\} }>0$.
Then we can pick $\eta_i^m$ such that $S(H_m,\eta_i^m)\cap
S=\emptyset \text{ if }
 H_m\cap \cl{S}=\emptyset \text{ for any  } S\in\mathcal{S}^i$. Let
$\eta^m=\min\{\eta_i^m:i=1,\cdots,2n+1\}$. This satisfies 6).

Notice that since $H_m$ is compact for each $m\in\N$, we can pick $\eta^m$ small
enough such that $\cl{S(H_{m-1},\eta^{m-1})}\cap
\cl{S(H_{m+1},\eta^{m+1})}=\emptyset$, giving (7).
\end{proof}

\paragraph{Step 1: Construction of the approximations}

Again, we generalize the construction of Ostrand, but must find ways
around the problem of not having {\sl bounded} functions. 

By induction on $k \ge 0$, using Lemma~\ref{ind_step}, for
$i=1,...,2n+1$,  
there exist: positive real numbers $\epsilon_k$ with $\epsilon_1<1/4$,
$\gamma_k$, $\eta_k^m$ distinct positive prime numbers $r_k^{i}$,  discrete
families $\mathcal{S}_k^1,..., \mathcal{S}_k^{2n+1}$ and continuous
functions $f_k^i: X\rightarrow [0,k+1]$, with the following properties. 

For each $k\in \N$, the families  $\mathcal{S}_k^1,...,
\mathcal{S}_k^{2n+1}$, $\gamma_k$ and $\eta_k^m$ satisfy (1)--(7) of
Lemma~\ref{ind_step}. Further:

   (A) $\lim_{k\rightarrow \infty} \gamma_k=\lim_{k\rightarrow
   \infty}\epsilon_k=0$;

   (B) $\epsilon_k<1/\Pi_{i=1}^{2n+1} r_{k}^{i}$;

   (C) $f_k^i$ is constant on the closure of those 
members of $\mathcal{S}_k^i$ which
   have nonempty intersection with $K_m$ for $(m\leq k)$, the constant being an
   integral multiple of $1/r_k^i$, and takes different values on
   distinct members. Then we can take a continuous extension of $f_k^i$
    to   the rest of the space.

   (D) For any $S$ in $\mathcal{S}_k^i$ having nonempty intersection
   with $H_m$, $m-1<f_m^i(S)<m+1$. Also for $m\geq 2$, by (7), we can make
   $m-1<f_k^i(S(H_m,\eta_k^m)<m+1$. For each $i\in\N$, if $\cl{S}\cap
   H_m\neq \emptyset$ and 
   $\cl{S}\cap H_{m+1}\neq \emptyset$, then $m<f_m^i(C)<m+1$;if
   $\cl{S}\cap H_m\neq \emptyset$ 
   and  $\cl{S}\cap H_{m-1}\neq \emptyset$, then $m-1<f_m(S)^i<m$;

   (E) For each $\ell<j<k$ and $x\in K_\ell$,
   $f_j^i(x)<f_k^i(x)<f_j^i(x)+\epsilon_j-\epsilon_k$ for any $i$.

\paragraph{Step 2: Construction of the basic functions}
From (E), for any $x\in K_m$ and $k>m$,
$f_m^i(x)<f_k^i(x)<f_m^i(x)+\epsilon_1$ for any $i=1,\ldots, 2n+1$.
Thus we can take the uniform limit of $f_k^i$ restricted on $K_m$.
For any $x\in K_m$ let
$\Phi_{i}(x)=\lim_{k\rightarrow\infty}f^{i}_k(x)$. So $\Phi_{i}$ is
continuous on $K_m$ for each $m$. Hence $\Phi_i$ is
continuous on $X$. Also by (D) for $x\in H_m$,
$m-1<\Phi_{i}(x)<m+1+1/4$.

Let $\mathcal{V}_k^i=\{\Phi_i(S): S\in \mathcal{S}_k^i\}$. Then if
$S\cap K_m\neq \emptyset$ and $S\in \mathcal{S}_k^i$ with $k>m$,
$\Phi_i(S)$ is contained in the interval
$[f_k^i(S),f_k^i(S)+\epsilon_k ]$ by (E). By (B), these closed
intervals are disjoint for each fixed $m$ and $k$ with $k\geq m$.
Then each $\mathcal{V}_k^i$ is discrete.

\paragraph{Step 3: Construction of the coordinate functions}

Take any function $f\in C(X)$. We find $g_1, \ldots , g_{2n+1} \in
C(\R)$ such that $f = \sum_{i=1}^{2n+1} g_i \circ \Phi_i$.

For each $s \ge 0$, define the compact subset $L_s=K_{s+1}\setminus
K_{s-1}^{\circ}$. 
Since $K_1$ is compact and
$K_1\subseteq K_2^{\circ}$, there exists a function $f_1$ such that
$f_1(x)=f(x)$ for $x\in K_1$ and $f_1(x)=0$ for $x\in X\setminus
K_2^{\circ}$. Then letting $g_1=f-f_1$, it is easy to see that $g_1(x)=0$ for
$x\in K_1$. Similarly, there exists $f_2$ such that $f_2(x)=g_1(x)$
for $x\in K_2$ and $f_2(x)=0$ for $x\in X\setminus K_3^{\circ}$.
Inductively, $f$ can be written  as an infinite sum
$\sum_{s=1}^\infty f_s$ such that $f_s(x)=0$ for $x\in X\setminus
L_s$.

For each $s$, $f_s$ is bounded and uniformly continuous. Fix $s\in
\N$.  Note that for each $x\in L_s$, $s-2<\Phi_i(x)<s+2+1/4$.

By construction, if we restrict the discrete families
$\mathcal{S}_1, \cdots, \mathcal{S}_{2n+1}$ and the  functions $\Phi_1, \cdots,
\Phi_{2n+1}$ to $K_{s+1}$, 
then the discrete families and functions are exactly those defined
by Ostrand~\cite{Ost}.

In particular, the functions $\Phi_1|L_s, \ldots ,
\Phi_{2n+1}|L_s$ are {\sl basic} for $L_s$ (Lemma~\ref{rest}).  
Thus we can represent
$f_s|L_s(x)=\sum_{i=1}^{2n+1}g_i^s(\Phi_{i}|L_s(x))$, for some $g_i^s\in C(\R)$.
We can redefine  $g_i^s$ to be constantly zero outside of
$[s-2,s+2+1/4]$  because the
image of $\Phi_i$ is contained in $[s-2, s+2+1/4]$ and $f_s(x)=0$ if $x\in
L_s\setminus (L_s)^{\circ} $. Now $f_s=\sum_{i=1}^{2n+1} g_i^s \circ \Phi_i$.

Finally, letting $g_i=\sum_{s=1}^\infty g_i^s$, we see that $g_i$ is continuous
because $g_i(x)$ is a finite sum of non-zero continuous functions for each
$x\in \R$, and $f = \sum_{i=1}^{2n+1} g_i \circ \Phi_i$ -- as
required. $\oti$

\section{Characterization of Dimension}\label{dim}

Lemma~\ref{cons_basic} says that a locally compact, separable
metrizable space of dimension $\le n$ has a basic family of size $\le
2n+1$, giving the forward implication in the `Further' of
Theorem~\ref{main}. For the converse:
\begin{lemma} A  space $X$ with a basic$^*$ family $\Phi_1, \ldots ,
  \Phi_N$, where $N \le 2n+1$, has dimension $\le n$.
\end{lemma}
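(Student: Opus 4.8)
The plan is to show that a space $X$ carrying a basic$^*$ family $\Phi_1, \ldots, \Phi_N$ with $N \le 2n+1$ must have covering dimension $\le n$, essentially by running Sternfeld's original obstruction argument. First I would invoke Lemma~\ref{emb}: the map $e = (\Phi_1, \ldots, \Phi_N) : X \to \R^N$ is an embedding, so without loss of generality $X$ is a (separable metrizable, by Corollary~\ref{sm_fd}) subspace of $\R^N$, and the $\Phi_i$ are the restrictions of the coordinate projections. Since dimension is monotone on subspaces and is determined locally, and since every point of $X$ has a compact (hence $C^*$-embedded, via Lemma~\ref{subspace}) neighborhood inheriting the restricted basic$^*$ family, it suffices to bound the dimension of a compact metric space with a basic$^*$ family of size $N \le 2n+1$; on a compact space basic$^*$ and basic coincide. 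So the real content reduces to: a compact metric space $X$ with a basic family of size $\le 2n+1$ has $\dim X \le n$, which is exactly Sternfeld's theorem cited in the introduction.

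The next step is to actually give (or cite) that argument. The cleanest route is to appeal directly to Sternfeld's result from \cite{St} as quoted in the introduction. If a self-contained proof is wanted, I would argue by contradiction: suppose $\dim X \ge n+1$. Then $X$ contains a copy of a space that is "too fat" to be coordinatized by $N \le 2n+1$ continuous functions in the additive sense — concretely, one exhibits inside $X$ a configuration of points on which no sum $\sum_{i=1}^N g_i \circ \Phi_i$ can match a prescribed continuous function $f$. The standard mechanism: using $\dim X \ge n+1$, find within $X$ (or within a suitable compact subspace) a continuum of pairwise "$\Phi$-linked" points forming cycles of even length along which the alternating sum of the $\Phi_i$-values telescopes to zero, yet along which a cleverly chosen $f$ has nonzero alternating sum — contradicting $f = \sum_i g_i \circ \Phi_i$ since the latter necessarily has vanishing alternating sum around such a cycle. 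This is a measure-theoretic/combinatorial pigeonhole argument on the fibers of the map $e$, and it is where essentially all the difficulty lies.

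The main obstacle, therefore, is establishing that obstruction for locally compact rather than compact spaces, and for unbounded functions. The reduction in the first paragraph handles this: restricting to a compact $C^*$-embedded neighborhood converts the problem to the compact, bounded case where Sternfeld's theorem applies verbatim, and then we conclude $\dim X = \sup_m \dim(\text{nbhd of a point}) \le n$ by the countable sum theorem for dimension (or simply by local computation of $\dim$ for separable metric spaces). So in the write-up I would (i) embed $X$ in $\R^N$ via Lemma~\ref{emb}; (ii) for each point pick a compact neighborhood $C$, note $\mathbf{\Phi}|C$ is basic$^*$ — hence basic — for $C$ by Lemma~\ref{subspace}; (iii) apply Sternfeld's theorem to get $\dim C \le n$; (iv) conclude $\dim X \le n$. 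The only genuinely new thing to check is step (ii)'s hypothesis — that compact subspaces are $C^*$-embedded — which is noted just before Lemma~\ref{subspace}, so the argument goes through cleanly.
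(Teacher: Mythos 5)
Your proposal follows essentially the same route as the paper: restrict to compact subspaces, use Lemma~\ref{subspace} to see the restricted family is basic$^*$ (hence basic, by compactness), invoke Sternfeld's theorem to bound each piece's dimension by $n$, and assemble with a sum theorem for dimension. The one step you assert without justification is that every point of $X$ has a compact neighborhood --- the paper derives local compactness from Lemma~\ref{lc} (a finite basic$^*$ family is in particular a countable generating$^*$ family) and then applies the Locally Finite Sum Theorem to a locally finite cover by compact sets, each already shown to have dimension $\le n$.
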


\begin{proof} Take any compact subset $K$ of $X$. By
  Lemma~\ref{subspace},  the maps $\Phi_1 |K, \ldots, \Phi_N|K$ form a
  basic$^*$   family for $K$, hence by compactness a basic family. 
By Sternfeld's result connecting dimension and basic families in
  compact spaces, it follows that $\dim K \le n$.

By Lemma~\ref{lc}, $X$ is locally compact, separable
metrizable. Hence it has a locally finite cover by compact sets --
each, by the above, of dimension $\le n$. By
the Locally Finite Sum Theorem for dimension, we deduce that $X$
itself must have dimension $\le n$.
\end{proof}

\end{document}